\theoremstyle{plain}
\newtheorem{teo}{Theorem}[section]
\newtheorem{lemma}[teo]{Lemma}
\newtheorem{prop}[teo]{Proposition}
\newtheorem{ackn}{Acknowledgments\!}
\theoremstyle{definition}
\newtheorem{dfnz}[teo]{Definition}
\theoremstyle{remark}
\newtheorem{rem}[teo]{Remark}
\numberwithin{equation}{section}
\def\NN{{{\mathbb N}}}
\def\RR{{\mathbb R}}
\newcommand{\R}[1]{\mathbb{R}^{#1}}
\newcommand{\de}{\partial}
\newcommand{\ve}{\varepsilon}
\newcommand{\M}[1]{\mathcal{#1}}
\title[A Note on Quasilinear Parabolic Equations on Manifolds]{A Note on Quasilinear Parabolic Equations on Manifolds}
\date{\today}
\author[Carlo Mantegazza]{Carlo Mantegazza}
\address[Carlo Mantegazza]{Scuola Normale Superiore di Pisa, P.za Cavalieri 7, Pisa, Italy, 56126}
\email[C. Mantegazza]{c.mantegazza@sns.it}
\author[Luca Martinazzi]{Luca Martinazzi}
\address[Luca Martinazzi]{Centro di Ricerca Matematica ``Ennio De
  Giorgi'', Scuola Normale Superiore di Pisa, P.za Cavalieri 7, Pisa,
  Italy, 56126}
\email[L. Martinazzi]{luca.martinazzi@sns.it}
\date{\today}
\begin{document}

\begin{abstract}  We prove short time existence,
    uniqueness and continuous dependence on the initial data of smooth
    solutions of quasilinear locally parabolic equations of arbitrary
    even order on closed manifolds.
\end{abstract}

\maketitle

\tableofcontents

\section{Introduction}

Let $(M,g)$ be a compact, smooth Riemannian manifold without boundary
of dimension $n$ and let $d\mu$ be the canonical measure associated to the 
metric tensor $g$.

We consider the parabolic problem with a smooth initial datum $u_0:M\to\RR$,
\begin{equation}\label{eq0}
\left\{
\begin{array}{ll}
u_t=Q[u]& \text{in }M\times [0,T]\\
u(\,\cdot\,,0)=u_0&\text{on } M\,,
\end{array}
\right.
\end{equation}
where $Q$ is a smooth, quasilinear, locally elliptic operator of order
$2p$, defined in $M\times[0,{\mathcal T})$ for some ${\mathcal T}>0$ which, 
adopting (as in the rest of the paper) the Einstein convention
of summing over repeated indices, can be expressed in local 
coordinates as
$$Q[u](x,t)=A^{i_1\dots i_{2p}}(x,t,u,\nabla u,\dots
,\nabla^{2p-1} u)\nabla^{2p}_{i_1\dots i_{2p}}u(x,t)+ b(x,t,u,\nabla u,\dots ,\nabla^{2p-1} u)\,,$$
where $A$ is a locally elliptic smooth $(2p,0)$--tensor of the form
\begin{equation}\label{struct}
A^{i_1 j_1 \dots i_pj_p}= (-1)^{p-1}E_1^{i_1 j_1}\cdots E_p^{i_pj_p}
\end{equation}
for some $(2,0)$--tensors $E_1,\dots,E_p$ and a function $b$ smoothly
depending on their arguments.\\
Local ellipticity here means that for every $L>0$ there exists a
positive constant $\lambda\in\RR$ such that
each tensor $E_\ell$ satisfies
\begin{equation}\label{ellit}
E_\ell^{ij}(x,t,u,\psi_1,\ldots,\psi_{2p-1})\xi_i\xi_j\ge \lambda |\xi|_{g(x)}^2,\quad \text{for every }
\xi\in T^*_xM\,,
\end{equation}
when $x\in M$, $t\in [0,T]$ with $T<{\mathcal T}$, $u\in \R{}$ with $|u|\le L$,
$\psi_k\in\otimes^kT^*_xM$ with $|\psi_k|_{g(x)}\le L$. In other words we require
that condition~\eqref{ellit} holds for some positive $\lambda$ whenever the
arguments of $E^{ij}_\ell$ lie in a compact set $K$ of their natural
domain of definition and assume that $\lambda$ depends only on
$K$. If $\lambda>0$ can be chosen independent of $K$ (i.e. of $L$), 
then we shall say that $A$ is \emph{uniformly} elliptic.

Clearly, this is not the most general notion of quasilinear parabolic
problems, due to the special ``product'' structure of the operator,
anyway it covers several important situations. 
For instance, our definition includes the case of standard
locally parabolic equations of order two in
non--divergence form. Notice that we make no growth assumptions on the
tensor $A$ and the function $b$.

Interchanging covariant derivatives, integrating by parts and using
interpolation inequalities (see~\cite{polden2} for details), 
the following G\r{a}rding's inequality holds for this class of operators. For every
smooth $u$ and $t\in[0,{\mathcal T})$, we have 
\begin{equation}\label{garding} 
-\int_M \psi A^{i_1\dots i_{2p}}(u)\nabla^{2p}_{i_1\dots i_{2p}}\psi\,d\mu
\geq \sigma \Vert \psi \Vert^2_{W^{p,2}(M)}- C \Vert
\psi \Vert^2_{L^2(M)}\qquad\forall\psi\in C^\infty(M)\,,
\end{equation}
where the constants $\sigma>0$ and $C>0$ depend continuously 
only on the $C^p$--norm of the tensor $A$ and on the $C^{3p-1}$--norm of the function $u$ at time
$t$ (and on the curvature tensor of $(M,g)$ and its covariant
derivatives). In particular, if $u$ depends smoothly on time,
$\sigma=\sigma(t)$ and $C=C(t)$ are continuous functions of time.

The aim of this note is to prove the following short time existence result. 
\begin{teo}\label{main} For every $u_0\in C^\infty(M)$ there exists a 
positive time $T>0$ such that problem~\eqref{eq0} has a smooth solution. 
Moreover, the solution is unique and depends continuously on $u_0$ in
the $C^\infty$--topology.
\end{teo}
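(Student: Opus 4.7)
The plan is to reduce the quasilinear problem~\eqref{eq0} to a fixed-point problem for an auxiliary linear equation, solving the latter by Galerkin approximation in combination with the G{\aa}rding inequality~\eqref{garding}. Concretely, given a function $v$ in a suitable closed ball in, say, $X_T=C^0([0,T];C^{3p-1}(M))$ with $v(\cdot,0)=u_0$, I would consider the linear parabolic equation
\begin{equation*}
u_t=A^{i_1\dots i_{2p}}(x,t,v,\nabla v,\dots,\nabla^{2p-1}v)\,\nabla^{2p}_{i_1\dots i_{2p}}u+b(x,t,v,\nabla v,\dots,\nabla^{2p-1}v),\qquad u(\cdot,0)=u_0,
\end{equation*}
with the coefficients regarded as known smooth functions on $M\times[0,T]$. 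The G{\aa}rding inequality~\eqref{garding} applies to the leading operator here, with constants depending on the fixed size of $v$ in $X_T$.

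First I would solve this linear equation by Galerkin approximation: expand $u$ in an $L^2$-orthonormal basis $\{\varphi_k\}$ of eigenfunctions of a fixed symmetric elliptic operator of order $2p$ on $M$ (for instance $1+(-\Delta_g)^p$), project the equation onto the span of the first $N$ eigenfunctions, solve the resulting linear ODE system for the coefficients, and then pass to the limit. The coercivity needed for the limit comes precisely from~\eqref{garding}, tested on $u$ and its covariant derivatives of all orders; together with Gronwall this produces uniform $W^{k,2}$ bounds for every $k$, hence a smooth-in-space limit $u$. Smoothness in time is then recovered by differentiating the equation in $t$ and applying the same type of energy estimate to $\partial_t^j u$, using the smoothness of the coefficients.

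Next I would define $\Phi:v\mapsto u$ and show it is a contraction on a sufficiently small time interval. Writing the equation satisfied by the difference $u_1-u_2$ corresponding to $v_1,v_2$, one gets a linear parabolic equation of the same type with a forcing term controlled in $W^{p,2}$ by $\Vert v_1-v_2\Vert_{X_T}$, thanks to the smooth, hence locally Lipschitz, dependence of $A$ and $b$ on their arguments. Applying~\eqref{garding} to $u_1-u_2$ and its derivatives up to order $3p-1$, followed by Sobolev embedding and absorbing factors of $T$ obtained from the time integral, one shows that $\Phi$ maps the ball into itself and is a contraction in $X_T$ provided $T$ is chosen small depending only on $\Vert u_0\Vert_{C^{3p-1}}$ and on the G{\aa}rding constants $\sigma,C$ evaluated at $u_0$. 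The fixed point is a smooth solution of~\eqref{eq0} by bootstrapping, since once $v=u$ the regularity obtained above is genuine regularity for the nonlinear problem. Uniqueness and continuous dependence on the initial datum in $C^\infty$ follow from the same energy estimate applied to the difference of two solutions with possibly different initial data.

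The main obstacle, I expect, is the absence of any growth assumption on $A$ and $b$: the constants $\lambda$ in~\eqref{ellit} and $\sigma,C$ in~\eqref{garding} are only \emph{locally} controlled, so one must keep the solution in a fixed compact set of phase space throughout the iteration, which forces $T$ to be small and requires a careful bookkeeping of norms. A related delicate point is that iterated covariant differentiation of the equation at order $k$ produces many commutator terms involving curvature and lower-order derivatives; closing the estimates at order $3p-1$ (the regularity at which the G{\aa}rding constants are defined) relies essentially on the product structure~\eqref{struct} of $A$, which ensures that the leading-order commutators retain the correct sign when integrated against $\psi=\nabla^{3p-1}u$.
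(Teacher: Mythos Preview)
Your approach is genuinely different from the paper's, and there is a real regularity gap in the iteration as you have set it up.

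\textbf{The gap.} You iterate in $X_T=C^0([0,T];C^{3p-1}(M))$ and claim that, for $v\in X_T$, Galerkin plus G\r{a}rding yield a solution $u$ with ``uniform $W^{k,2}$ bounds for every $k$''. This is only true when the coefficients are smooth. If $v\in C^{3p-1}$ then $A(v),b(v)$ depend on $\nabla^{2p-1}v\in C^{p}$ and hence are only $C^{p}$ in space. To control $\nabla^{3p-1}u$ you must differentiate the linear equation $3p-1$ times; the commutator terms then contain $\nabla^{j}A(v)$ with $j$ up to $3p-1$, i.e.\ they require $\nabla^{5p-2}v$, far beyond $C^{3p-1}$. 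So $\Phi$ does not obviously map $X_T$ into itself, let alone contract there. The same loss obstructs your contraction estimate on $u_1-u_2$, and the product structure~\eqref{struct} does not help: it fixes the \emph{sign} of the principal part after differentiation, but it does not supply the missing derivatives of the coefficients. This is not a technicality one can ``bookkeep'' away; it is precisely the kind of convergence issue for frozen-coefficient schemes that the paper (citing Sharples) sets out to circumvent.

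\textbf{What the paper does instead.} The paper avoids any iteration. It works in the parabolic Sobolev spaces $P^m(M,T)$ with $m>\tfrac{n+6p-2}{4p}$, for which $P^m$ embeds into $C^0$ at the level of $\nabla^{2p-1}u$ and enjoys algebra-type multiplication properties (Proposition~\ref{propsobo}). The key step is Lemma~\ref{F}: the nonlinear map $\M{F}(u)=(u(\cdot,0),u_t-Q[u])$ is $C^1$ from $P^m$ to $W^{p(2m-1),2}\times P^{m-1}$. Its differential at a \emph{smooth} base point $w$ is exactly a linear parabolic operator of the form~\eqref{sistemalin}, hence an isomorphism by Polden's linear result (Proposition~\ref{teopolden2}). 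Then the inverse function theorem gives a local diffeomorphism. To land on the target $(u_0,0)$, the paper does not try to make the error $f=w_t-Q[w]$ small in norm; instead it chooses the time-Taylor coefficients of the auxiliary function $\widetilde{u}_0$ so that $\partial_t^\ell f|_{t=0}=0$ for $\ell\le m-1$, and then translates $f$ in time so that it vanishes on $[0,T']$. This translation trick is what replaces your ``factors of $T$'' contraction argument and is what makes the scheme close without any loss of derivatives. Uniqueness is by the same energy estimate you describe; continuous dependence comes directly from the inverse function theorem applied at every $m$.

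If you want to salvage a fixed-point approach, you would have to work in a scale of spaces that is closed under the composition $v\mapsto A(v)$ and under the linear solution operator without derivative loss---for instance parabolic H\"older spaces $C^{2p,\alpha}$ with Schauder estimates, or the $P^m$ spaces themselves with $m$ large---and you would need linear solvability with nonsmooth coefficients in that scale, which the paper does not provide and which is substantially more work than what you have sketched.
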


\medskip

Our interest in having a handy proof of this result is related to
geometric evolution problems, like for instance the Ricci flow, the mean curvature
flow, the Willmore flow~\cite{kuschat1}, the $Q$--curvature
flow~\cite{malstr}, the Yamabe flow~\cite{brendle1,struwschwe,ye1}, etc. 
In all these problems, the very first step is
to have a short time existence theorem showing that for an initial
geometric structure (hypersurface, metric) the flow actually starts. Usually, after
some manipulations in order to eliminate the degeneracies due to the
geometric invariances, one has to face a quasilinear parabolic equation
with smooth coefficients and smooth initial data.

If we replace the compact manifold $M$ with a
bounded domain $\Omega\subset\R{n}$, short time existence for
quasilinear systems of order two, with prescribed boundary conditions
and initial data, was proven by Giaquinta and Modica~\cite{giaqmod} in
the setting of H\"older spaces.

A different approach to Theorem~\ref{main} was developed by Polden in his 
Ph.D. Thesis~\cite{polden2} (see also~\cite{huiskpold}), by means of 
an existence result for linear equations in parabolic Sobolev
spaces and the inverse function theorem. Unfortunately, as pointed out by
Sharples~\cite{sharples}, such procedure has a gap in the convergence
of the solutions of the ``frozen'' linear problems to a solution of
the quasilinear one.

In the same paper~\cite{sharples} Sharples, pushing
further the estimates of Polden and allowing nonsmooth coefficients, 
was able by means of an iteration scheme to show the existence 
of a short time solution of the quasilinear 
problem on a two--dimensional manifold, when the operator is of
order two and {\em in divergence form}.

Our goal here is instead to simply fill the gap in Polden's proof. We start
with his linear result and we show that his linearization
procedure actually works if one linearizes at a suitably chosen function and 
discusses in details the above mentioned convergence.\\
As we do not assume any condition on the
operator (only its product structure) and on the dimension of the
manifold, we have a complete proof of the short time existence 
of a smooth solution to these quasilinear locally parabolic equations of arbitrary order
on compact manifolds and of its uniqueness and smooth dependence 
on the initial data.
We refer the interested reader to the nice and detailed introduction
in~\cite{sharples} for the different approaches to the problem.

\medskip

The paper is organized as follows.
In the next section we present the linearization procedure, assuming
Polden's linear result (Proposition~\ref{teopolden} below) and we
prove Theorem~\ref{main} by means of 
Lemma~\ref{F} which is the core of our argument. Roughly speaking,
when a candidate solution $u$ stays in some parabolic Sobolev space
of order high enough, the functions $u, \nabla u,\dots,\nabla^{2p-1}u$
are continuous (or even more regular), hence the same holds for the
tensor $A$ and the function $b$. This implies that the map $u\mapsto
(u_t-Q[u])$ is of class $C^1$ between some suitable spaces, as it
closely resembles a linear map with regular coefficients. This allows 
the application of the inverse function theorem which, in conjunction
with an approximation argument, yields the existence of a solution.
The last two sections are devoted to the proof of Lemma~\ref{F} and
to the discussion of the parabolic Sobolev embeddings on which such
proof relies.

We mention that the results can be extended to quasilinear
parabolic {\em systems} as the linearization procedure remains the
same and Polden's linear estimates (Proposition~\ref{teopolden}) can
be actually easily generalized, assuming a suitable 
definition of ellipticity. In fact one easily sees
that our result applies to all quasilinear systems whose linearization
is invertible in the sense of Proposition~\ref{teopolden2} below. For
more general definition of elliptic or parabolic operators of
higher--order see~\cite{AgDoNi}.

\bigskip

{\em In the following the letter $C$ will denote a constant which can
  change from a line to another and even within the same formula.}

\bigskip

\begin{ackn} 
We are grateful to Alessandra~Lunardi for useful suggestions.\\
We wish to thank Mariano~Giaquinta for several interesting discussions.\\ 
The authors are partially supported by the Italian project FIRB--IDEAS
``Analysis and Beyond''.\\
The second author is partially supported by the 
{\em Swiss National Fond} Grant n.~PBEZP2--129520.
\end{ackn}

\section{Proof of the Main Theorem}

We recall Polden's result for linear parabolic equations.
Let us consider the problem
\begin{equation} \label{sistemalin}
\begin{cases}
\displaystyle{u_t - A^{i_1\dots i_{2p}}\nabla^{2p}_{i_1\dots
    i_{2p}}u
-\sum_{k=0}^{2p-1}R_k^{j_1\dots j_k}\nabla^k_{j_1\dots j_k}u=b}\\
\displaystyle{u(\,\cdot\,,0) = u_0 \,,}
\end{cases}
\end{equation}
where all the tensors $A$ and $R_k$ depend only on $(x,t)\in M\times [0,+\infty)$, are 
smooth and uniformly bounded with all their
derivatives. Moreover, we assume that
the tensor $A$ has the product structure~\eqref{struct}, where each
$E_\ell$ is uniformly elliptic.

The G\r{a}rding's inequality for the linear operator
$$
L(u)=A^{i_1\dots i_{2p}}\nabla^{2p}_{i_1\dots
    i_{2p}}u
-\sum_{k=0}^{2p-1}R_k^{j_1\dots j_k}\nabla^k_{j_1\dots j_k}u
$$ 
reads (see again~\cite{polden2} for details) 
\begin{equation}\label{garding2} 
-\int_M \psi L(\psi)\,d\mu
\geq \frac{\lambda}{2} \Vert \psi \Vert^2_{W^{p,2}(M)}- C \Vert
\psi \Vert^2_{L^2(M)}\qquad\forall\psi\in C^\infty(M)\,,
\end{equation}
where the constant $C>0$ depends only on the $C^p$--norm of the
tensors $A$ and $R_k$. Clearly, by approximation this inequality holds
also for every $\psi\in W^{2p,2}(M)$.

\begin{dfnz}
For any $m\in\mathbb{N}$ and $a\in\RR^+$ we define $P^m_a(M)$ to be the
completion of $C_c^\infty(M\times[0,+\infty))$ under the {\em parabolic norm}
$$
\Vert f\Vert_{P^m_a(M)}^2=\sum_{\text{$j,k\in\NN$ and $2pj+k\leq2pm$}}\int_{M\times[0,+\infty)}
e^{-2at}\vert\de_t^j \nabla^k f\vert^2\,d\mu\,dt
$$
and analogously $P^m(M,T)$ as the completion of
$C^\infty(M\times[0,T])$ under the norm
$$
\Vert f\Vert_{P^m(M,T)}^2=\sum_{\text{$j,k\in\NN$ and
    $2pj+k\leq2pm$}}\int_{M\times[0,T]}
\vert\de_t^j \nabla^k f\vert^2\,d\mu\,dt\,,
$$
for every $T\in\RR^+$.
\end{dfnz}
Clearly  for every $T\in\RR^+$ there is a natural continuous embedding
$P^m_a(M)\hookrightarrow P^m(M,T)$.

We have then the following global existence result for problem~\eqref{sistemalin}, by
Polden~\cite[Theorem~2.3.5]{polden2}.
\begin{prop}\label{teopolden} For every $m\in\NN$ there exists
  $a\in\RR^+$ 
large enough such that the linear map
\begin{equation}\label{Flin}
\Phi(u)=\Bigl(u_0, u_t - A^{i_1\dots i_{2p}}\nabla^{2p}_{i_1\dots
    i_{2p}}u
-\sum_{k=0}^{2p-1}R_k^{j_1\dots j_k}\nabla^k_{j_1\dots j_k}u\Bigr)=(u_0,L(u))\,,
\end{equation}
where $u_0=u(\,\cdot\,,0)$, is an isomorphism of $P_a^m(M)$ onto
$W^{p(2m-1),2}(M)\times P_a^{m-1}(M)$.
\end{prop}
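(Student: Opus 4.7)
The plan is to establish the isomorphism by combining weighted energy estimates coming from G\r{a}rding's inequality~\eqref{garding2} with a Galerkin approximation, and then a bootstrap in the order $m$.

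First, I would handle injectivity and a basic energy estimate simultaneously. Given $u\in C_c^\infty(M\times[0,+\infty))$ solving $u_t-L(u)=b$ with $u(\cdot,0)=u_0$, multiply by $u\,e^{-2at}$ and integrate over $M\times[0,+\infty)$. Since
\[
\int_0^{+\infty}\!\!\int_M e^{-2at}u\,u_t\,d\mu\,dt = -\tfrac12\|u_0\|_{L^2(M)}^2 + a\int_0^{+\infty}\!\!\int_M e^{-2at}u^2\,d\mu\,dt,
\]
inequality~\eqref{garding2} and Cauchy--Schwarz give, for $a$ sufficiently large depending on the $C^p$-norm of the coefficients,
\[
(a-C)\int_0^{+\infty} e^{-2at}\|u\|_{L^2(M)}^2\,dt + \tfrac{\lambda}{2}\int_0^{+\infty} e^{-2at}\|u\|_{W^{p,2}(M)}^2\,dt \le C\|u_0\|_{L^2(M)}^2 + C\|b\|_{P_a^0(M)}^2.
\]
This already yields injectivity of $\Phi$ and continuity of $\Phi^{-1}$ in the $P_a^0$ norm.

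Next, I would push this to higher regularity by differentiating the equation. For $j,k\in\NN$ with $2pj+k\le 2pm$, applying $\partial_t^j\nabla^k$ gives an equation of the same structure for $v=\partial_t^j\nabla^k u$, with new right-hand side consisting of commutator terms involving lower-order derivatives of $u$, of the coefficients $A,R_k$, and of the curvature tensor of $(M,g)$. Running the weighted energy argument for $v$ and absorbing the commutators inductively, I expect
\[
\|u\|_{P_a^m(M)} \le C\bigl(\|u_0\|_{W^{p(2m-1),2}(M)} + \|b\|_{P_a^{m-1}(M)}\bigr).
\]
The trace exponent $p(2m-1)$ is dictated by parabolic scaling: since one time derivative counts as $2p$ spatial derivatives, losing the usual half-derivative at $t=0$ from a function in $P_a^m$ leaves spatial regularity $W^{2pm-p,2}=W^{p(2m-1),2}$, and the corresponding parabolic trace theorem (proved separately, as indicated in the introduction) gives the needed surjectivity and continuity of $u\mapsto u(\cdot,0)$.

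For existence I would use a Galerkin scheme. Let $\{\phi_k\}$ be an $L^2(M)$-orthonormal basis of smooth eigenfunctions of $\Delta_g$ and project~\eqref{sistemalin} onto $V_N=\mathrm{span}\{\phi_1,\ldots,\phi_N\}$, obtaining a linear ODE system for the Fourier coefficients with smooth inhomogeneity and initial datum the $V_N$-projection of $u_0$. This ODE is globally solvable on $[0,+\infty)$, and the a priori estimate from the previous step applies uniformly in $N$ (the weight $e^{-2at}$ ensures integrability in time). A weak-$\ast$ limit then produces a solution $u\in P_a^m(M)$ of the full problem, and the initial condition passes to the limit by continuity of the trace map.

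The principal obstacle is the bookkeeping in the higher-order estimate: commuting many covariant derivatives across variable coefficients, curvature terms, and non-commuting $\nabla$'s produces a proliferation of lower-order terms that must all be controlled by choosing a single $a$ large enough for G\r{a}rding to absorb them. The cleanest route is induction on $m$: the inductive hypothesis bounds every commutator contribution by $\|u_0\|_{W^{p(2m-3),2}(M)}+\|b\|_{P_a^{m-2}(M)}$, so at each step only the genuinely top-order terms in $v=\partial_t^j\nabla^k u$ with $2pj+k=2pm$ need to be treated by a fresh application of the weighted energy method.
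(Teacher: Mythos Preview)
The paper does not supply its own proof of this proposition: it is quoted as Polden's result \cite[Theorem~2.3.5]{polden2} and used as a black box (the paper then proves Proposition~\ref{teopolden2} from it). So there is no argument in the paper to compare your proposal against.

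That said, your outline --- weighted energy estimate from G\r{a}rding's inequality~\eqref{garding2}, bootstrap in $m$ by commuting $\partial_t^j\nabla^k$ through the equation, Galerkin approximation for existence, and a parabolic trace theorem for $u\mapsto u(\,\cdot\,,0)$ --- is the standard architecture for such linear results and is presumably close to what Polden does. One point that needs more care than you indicate is the base case $m=1$: your first estimate only controls $\int_0^\infty e^{-2at}\|u(\,\cdot\,,t)\|_{W^{p,2}(M)}^2\,dt$, not the full $P_a^1$--norm, which also requires $u_t$ and $\nabla^{2p}u$ in weighted $L^2$. Your proposed cure, applying $\partial_t$ and repeating the energy argument for $v=u_t$, formally needs $v(\,\cdot\,,0)=L(u_0)+b(\,\cdot\,,0)$, and $L(u_0)$ involves $\nabla^{2p}u_0$, which is not available when $u_0$ is merely in $W^{p,2}(M)$. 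The usual fix is to run all estimates on the Galerkin approximants (where everything is smooth, so differentiating and testing against higher-order quantities is legitimate), verify they are uniform in the truncation parameter, and only then pass to the limit; equivalently, one can establish the result first for smooth data and conclude by density. You hint at this, but the order of the argument matters.
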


In the following it will be easier (though conceptually equivalent) to
use the spaces $P^m(M,T)$ instead of the weighted spaces $P_a^m(M)$. For
this reason we translate Proposition~\ref{teopolden} into the setting 
of $P^m(M,T)$ spaces.

\begin{prop}\label{teopolden2} For every $T>0$ and
  $m\in\mathbb{N}$ the map $\Phi$ given by formula~\eqref{Flin}
  is an isomorphism of $P^m(M,T)$ onto $W^{p(2m-1),2}(M)\times P^{m-1}(M,T)$.
\end{prop}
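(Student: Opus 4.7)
The plan is to deduce the $P^m(M,T)$-version of Polden's isomorphism from the weighted version in Proposition~\ref{teopolden} via the natural continuous embedding $P_a^m(M) \hookrightarrow P^m(M,T)$ together with a bounded linear extension operator $P^{m-1}(M,T)\to P_a^{m-1}(M)$. Once $\Phi$ is shown to be a continuous bijection, the bounded inverse theorem then supplies a continuous inverse. Continuity of $u\mapsto L(u)$ from $P^m(M,T)$ to $P^{m-1}(M,T)$ is routine: each term of $L$ involves at most one time derivative or $2p$ spatial derivatives, so a parabolic derivative $\partial_t^j\nabla^k$ with $2pj+k\le 2p(m-1)$ produces expressions bounded by $\|u\|_{P^m(M,T)}$, using that $A$ and $R_k$ are smooth and uniformly bounded with all their derivatives. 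Continuity of $u\mapsto u(\cdot,0)$ into $W^{p(2m-1),2}(M)$ is the parabolic trace estimate already implicit in Proposition~\ref{teopolden}.

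For surjectivity, given $(u_0,b)\in W^{p(2m-1),2}(M)\times P^{m-1}(M,T)$, I would construct an extension $\tilde b\in P_a^{m-1}(M)$ of $b$ with $\|\tilde b\|_{P_a^{m-1}(M)}\le C_T\|b\|_{P^{m-1}(M,T)}$ by a Seeley-type reflection in $t$ across $t=T$ matching the first $m-1$ time derivatives at $T$, followed by multiplication by a smooth cutoff supported in $[0,2T]$. Since the reflection acts only in the $t$ variable, it commutes with spatial derivatives and simultaneously controls every mixed seminorm $\|\partial_t^j\nabla^k\cdot\|_{L^2}$ with $2pj+k\le 2p(m-1)$, while the cutoff takes care of the exponential weight. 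Applying Proposition~\ref{teopolden} produces $\tilde u\in P_a^m(M)$ with $\Phi(\tilde u)=(u_0,\tilde b)$, and its restriction $u:=\tilde u|_{M\times[0,T]}$ lies in $P^m(M,T)$ by the embedding and satisfies $\Phi(u)=(u_0,b)$.

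Injectivity follows from a standard energy argument: if $\Phi(u)=0$, then $u(\cdot,0)=0$ and $u_t = A^{i_1\dots i_{2p}}\nabla^{2p}_{i_1\dots i_{2p}}u+\sum_k R_k^{j_1\dots j_k}\nabla^k_{j_1\dots j_k} u$ almost everywhere, so testing against $u$ and using the G\r{a}rding inequality~\eqref{garding2} yields $E'(t)\le C E(t)$ for $E(t):=\tfrac12\|u(\cdot,t)\|_{L^2(M)}^2$; this is absolutely continuous because $u\in P^1(M,T)$, so Gronwall forces $u\equiv 0$. The main delicate step I anticipate is the construction of the extension operator $P^{m-1}(M,T)\to P_a^{m-1}(M)$: while standard for integer-order anisotropic Sobolev spaces, it is the one ingredient not already supplied by Polden's linear theory, and the anisotropic mixing of time and space derivatives in the parabolic norm must be handled uniformly across every admissible pair $(j,k)$.
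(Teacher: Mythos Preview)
Your proposal is correct and follows essentially the same route as the paper: continuity of $\Phi$ via the linear structure and Polden's trace estimate, surjectivity by extending $b$ to $P_a^{m-1}(M)$ and invoking Proposition~\ref{teopolden} followed by restriction, injectivity via G\r{a}rding's inequality and Gronwall, and the open mapping theorem to conclude. The only difference is that you spell out the Seeley-type construction of the extension operator, whereas the paper simply asserts the existence of an extension $\widetilde b\in P_a^{m-1}(M)$ without further comment.
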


\begin{proof}
The continuity of the second component of $\Phi$ is obvious while the
continuity of the first component follows as in the Polden's proof of
Proposition~\ref{teopolden} in~\cite{polden2}. Hence, the map $\Phi$
is continuous, now we show that it is an isomorphism.\\ 
Given any $b\in P^{m-1}(M,T)$ we consider an 
extension $\widetilde{b}\in P^{m-1}_a(M)$ of the 
function $b$ and we let $\widetilde{u}\in P^m_a(M)$ be the 
solution of problem~\eqref{sistemalin} for $\widetilde{b}$. Clearly,
$u=\widetilde{u}\vert_{M\times[0,T]}$ belongs to $P^m(M,T)$ and
satisfies $\Phi(u)=(u_0,b)$ in $M\times[0,T]$. Suppose
that $v\in P^m(M,T)$ is another 
function such that $\Phi(v)=(u_0,b)$ in $M\times[0,T]$, then setting $w=u-v\in P^m(M,T)$ we have 
that 
\begin{equation*}
\begin{cases}
\displaystyle{w_t - A^{i_1\dots i_{2p}}\nabla^{2p}_{i_1\dots
    i_{2p}}w
-\sum_{k=0}^{2p-1}R_k^{j_1\dots j_k}\nabla^k_{j_1\dots j_k}w=w_t-L(w)=0}\\
\displaystyle{w(\,\cdot\,,0) = 0 \,.}
\end{cases}
\end{equation*}
By the very definition of solution in $P^m(M,T)$ (see~\cite{polden2}) 
and G\r{a}rding's inequality~\eqref{garding2}, we get
\begin{align*}
\int_M w^2(x,t)\,d\mu(x)=&\,\int_0^t\int_M 2ww_t\,d\mu\,ds\\
=&\,2\int_0^t\int_M wL(w)\,d\mu\,ds\\
\leq&\,-\frac{\lambda}{2} \int_0^t\int_M \vert \nabla^p
w\vert^2\,d\mu\,ds+C\int_0^t\int_M w^2\,d\mu\,ds\\
\leq&\, C\int_0^t\int_M
w^2(x,s)\,d\mu(x)\,ds\,,
\end{align*}
as $w(\,\cdot\,,t)\in W^{2p,2}(M)$ for almost every $t\in[0,T]$ and 
where the constant $C>0$ depends only on $T$ as the coefficients of
the operator $L$ are smooth. Then, by Gronwall's lemma (in its integral
version) it follows that $\int_M w^2(\,\cdot\,,t)\,d\mu$ is zero for
every $t\in[0,T]$, as it is zero at time $t=0$. It follows
that $w$ is zero on all $M\times[0,T]$, hence the two functions $u$ and
$v$ must coincide.

Since the map $\Phi:P^m(M,T)\to W^{p(2m-1),2}(M)\times P^{m-1}(M,T)$ is
continuous, one--to--one and onto, it is an isomorphism by the open mapping theorem.
\end{proof}

\begin{rem}\label{smrem} 
When $u_0$ and $b$ are smooth 
the unique solution $u$ of problem~\eqref{sistemalin} belongs to all
the spaces $P^m(M,T)$ for every $m\in\NN$. As by Sobolev
embeddings for any $k\in \mathbb{N}$ we can find a large 
$m\in\mathbb{N}$ so that $P^m(M,T)$ continuously embeds into
$C^k(M\times[0,T])$,
we can conclude that $u$ actually belongs to $C^\infty(M\times [0,T])$.
\end{rem}

Now we are ready to prove Theorem~\ref{main}. The tensor
$A$ and the function $b$ from now on will depend on
$x,t,u,\nabla u,\ldots,\nabla^{2p-1}u$ as in the introduction. 
Since $M$ is compact there exists a constant $C>0$ such that the initial
datum satisfies $\vert u_0\vert+\vert\nabla u_0\vert_g+\ldots+|\nabla^{2p-1} u_0|_g\leq C$. 
Then, since we are interested in existence for short time, possibly modifying 
the tensor $A$ and the function $b$ outside a compact set with some ``cut--off'' functions, 
we can assume that if $\vert u\vert+\vert\nabla u\vert_g +\ldots +|\nabla^{2p-1}u|_g +t \geq2C$, then
$$
E_\ell^{ij}(x,t,u,\nabla u,\dots,\nabla^{2p-1} u)=g^{ij}(x),\quad
\text{ and }\,\quad b(x,t,u,\nabla u,\dots,\nabla^{2p-1} u)=0\,.
$$ 
In particular we can assume that the tensors $E_\ell$ are uniformly
elliptic.

For a fixed $m\in\NN$, we consider the map defined on $P^m(M,T)$ given by
\begin{equation*}
\M{F}(u)=(u_0, u_t-Q[u])
=\Bigl(u(\,\cdot\,,0), u_t - A(u)\cdot\nabla^{2p}u 
-b(u)\Bigr)\,,
\end{equation*}
where in order to simplify we used the notation
$$
A(u)\cdot \nabla^{2p}v(x,t)=A^{i_1\dots i_{2p}}(x,t,u(x,t),\dots,
\nabla^{2p-1}u(x,t))\nabla^{2p}_{i_1\dots i_{2p}}v(x,t)\,,
$$
and
$$
b(u)(x,t)=b(x,t,u(x,t),\dots,\nabla^{2p-1}u(x,t))
$$
for $u,v\in P^{m}(M,T)$.\\
We have seen in Proposition~\ref{teopolden2} that if $A(u)$ and $b(u)$
only depend on $x\in M$ and $t\in[0,T]$ (and not on $u$ and its space
derivatives), then $\M{F}$ is a continuous map from 
$P^m(M)$ onto $W^{p(2m-1),2}(M)\times P^{m-1}(M)$. This is not the
case in general when $A$ and $b$ depend on $u$ and its derivatives,
but it is true if $m\in\NN$ is large enough and in this case $\M{F}$
is actually $C^1$.

\begin{lemma}\label{F} Assume that
\begin{equation}\label{2pm}
m>\frac{\mathrm{dim}M+6p-2}{4p}=\frac{n+6p-2}{4p}\,,
\end{equation}
and $u\in P^{m}(M,T)$. Then $\M{F}(u)\in W^{p(2m-1),2}(M)\times P^{m-1}(M,T)$ and the map 
$$
\M{F}:P^m(M,T)\to W^{p(2m-1),2}(M)\times P^{m-1}(M,T)
$$
is of class $C^1$.
\end{lemma}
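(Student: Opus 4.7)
The map $\M{F}$ splits naturally into the trace component $u\mapsto u(\,\cdot\,,0)$ and the equation component $u\mapsto u_t-A(u)\cdot\nabla^{2p}u-b(u)$. The trace is linear and its continuity from $P^m(M,T)$ into $W^{p(2m-1),2}(M)$ is already part of Proposition~\ref{teopolden2}; the map $u\mapsto u_t$ is linear and continuous from $P^m(M,T)$ into $P^{m-1}(M,T)$ directly from the definition of the parabolic norm. Both are therefore $C^1$, so the statement reduces to showing that the two nonlinear maps $u\mapsto A(u)\cdot\nabla^{2p}u$ and $u\mapsto b(u)$ are well defined from $P^m(M,T)$ into $P^{m-1}(M,T)$ and are of class $C^1$ there.

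The first input is a parabolic Sobolev embedding, to be proved in the last section: the threshold $m>(n+6p-2)/(4p)$ is chosen precisely so that $P^m(M,T)\hookrightarrow C^{2p-1}(M\times[0,T])$. In particular $u,\nabla u,\dots,\nabla^{2p-1}u$ are uniformly bounded and continuous, so that $A(u)$ and $b(u)$ are smooth bounded functions of $(x,t)$. To bound the $P^{m-1}$-norm of $A(u)\cdot\nabla^{2p}u$ I would then expand $\partial_t^j\nabla^k[A(u)\cdot\nabla^{2p}u]$, for $2pj+k\le 2p(m-1)$, using Leibniz and chain rules. Each resulting term is a product of a bounded smooth factor (a partial derivative of $A$ in its last arguments) with several space-time derivatives $\partial_t^{j_r}\nabla^{k_r}u$ of $u$, whose parabolic orders add up to at most $2pj+k+2p\le 2pm$. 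A parabolic Gagliardo--Nirenberg interpolation then puts each such product into $L^2(M\times[0,T])$, by balancing the $L^\infty$ control on low-order derivatives against the $L^2$ control on the top-order parabolic derivatives of $u$. The same scheme treats $b(u)$.

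For the $C^1$ claim I would compute the formal Fréchet derivative $D\M{F}(u)[v]$, whose nonlinear contributions come from applying the chain rule to $A(u)$ and $b(u)$ in direction $v$, and then check that the Taylor remainder $\M{F}(u+v)-\M{F}(u)-D\M{F}(u)[v]$ has $P^{m-1}(M,T)$-norm of order $o(\Vert v\Vert_{P^m(M,T)})$, via the same multiplicative estimates applied to one extra power of $v$. Continuity of $u\mapsto D\M{F}(u)$, regarded as a map from $P^m(M,T)$ into the Banach space of bounded linear operators $P^m(M,T)\to W^{p(2m-1),2}(M)\times P^{m-1}(M,T)$, reduces by linearity in $v$ to the continuity in $P^{m-1}(M,T)$ of the coefficients of $D\M{F}(u)$ as functions of $u$, which in turn follows from the same interpolation estimates applied to differences.

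The main obstacle is the combinatorial bookkeeping together with the parabolic Gagliardo--Nirenberg inequalities needed to close the multiplicative estimates: each product arising in the full expansion of $\partial_t^j\nabla^k[A(u)\cdot\nabla^{2p}u]$ must be distributed over exponents $L^{p_r}$ whose reciprocals sum to $1/2$, and the interpolation between the $C^{2p-1}$ control and the top-order $L^2$ control must be available for every factor. The value $(n+6p-2)/(4p)$ is exactly the borderline $m$ that makes the $C^{2p-1}$ embedding and all these interpolation inequalities hold simultaneously, which is precisely the content of the final section of the paper.
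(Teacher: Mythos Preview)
Your outline tracks the paper's proof: split off the linear trace and $u_t$ pieces, then show that $u\mapsto A(u)\cdot\nabla^{2p}u$ and $u\mapsto b(u)$ are $C^1$ from $P^m$ to $P^{m-1}$ using the embedding $P^m\hookrightarrow C^{2p-1}$ together with multiplicative estimates on the Leibniz/Fa\`a di Bruno expansion. Two cosmetic differences: the paper works with the explicit parabolic Sobolev embeddings $\nabla^i u\in L^{q_i}$, $1/q_i=1/2-(2pm-i)/(n+2p)$, combined with H\"older's inequality (rather than Gagliardo--Nirenberg interpolation), and it obtains $C^1$ by proving continuity of the Gateaux derivative rather than estimating the Fr\'echet remainder directly.

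There is, however, a genuine slip in your derivative count, and it sits exactly at the ``main obstacle'' you flag. The total parabolic order of the $u$-factors in a generic term of $\partial_t^j\nabla^k\bigl(A(u)\cdot\nabla^{2p}u\bigr)$ is \emph{not} bounded by $2pj+k+2p$. Because $A$ depends on $u,\nabla u,\ldots,\nabla^{2p-1}u$, each chain-rule hit on $A$ consumes one derivative from the budget but returns a factor carrying up to $2p$ derivatives of $u$; after $j'$ such hits the correct bound is $2pj+k+2p+(2p-1)j'$, so for $2pj+k\le2p(m-1)$ one obtains $i_1+\cdots+i_{j'+1}\le 2pm+(2p-1)j'$, strictly larger than $2pm$ whenever $j'\ge1$ (see the paper's formula~\eqref{dA2}). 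With your undercount the H\"older estimate would close under the weaker hypothesis $4pm\ge n+2p$; with the correct count one needs exactly $2pm-2p+1>(n+2p)/2$, i.e.\ condition~\eqref{2pm}, as verified in inequality~\eqref{contoalg}. In other words the multilinear estimate is just as tight as the $C^{2p-1}$ embedding, not a consequence of it, and your stated bound would not survive an actual attempt to carry the argument through.
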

We postpone the proof of this lemma to Section~\ref{lemmasec}.

We fix $m\in\NN$ such that the hypothesis of Lemma~\ref{F} holds and we set 
$$
\widetilde{u}_0(x,t)=\sum_{\ell=0}^{m-1}\frac{a_\ell(x)\,t^\ell}{\ell!}
$$
for some functions $a_0,\dots,a_{m-1}\in C^\infty(M)$ to be determined
later. Let $w\in P^m(M,T)$ be the unique solution of the linear
problem
$$
\begin{cases}
\displaystyle{w_t=A(\widetilde{u}_0)\cdot \nabla^{2p}w +
b(\widetilde{u}_0)}\\
\displaystyle{w(\,\cdot\,,0)=u_0\,.}
\end{cases}
$$
Such solution exists by Proposition~\ref{teopolden2} and it is
smooth by Remark~\ref{smrem}, as $u_0$ and $\widetilde{u}_0$ are smooth 
(thus also $A(\widetilde{u}_0)$ and $b(\widetilde{u}_0)$).\\
Hence, we have
$$
\M{F}(w)=(u_0,w_t-Q[w])
=\Bigl(u_0,(A(\widetilde{u}_0)-A(w))\cdot \nabla^{2p}w
+b(\widetilde{u}_0)-b(w)\Bigr)=:(u_0,f)\,,
$$
where we set
$f=(A(\widetilde{u}_0)-A(w))\cdot \nabla^{2p}w+
b(\widetilde{u}_0)-b(w)$.

If we compute the differential $d\M{F}_w$ of the map $\M{F}$
at the ``point'' $w\in C^\infty(M\times[0,T])$, acting on $v\in P^m(M,T)$, we
obtain
\begin{align}\label{dfeq}
d\M{F}_w(v)=\Bigl(v_0, v_t&\,-A^{i_1\dots i_{2p}}(w)\nabla^{2p}_{i_1\dots
  i_{2p}}v-D_w A^{i_1\dots i_{2p}}(w)v\nabla^{2p}_{i_1\dots
  i_{2p}}w\dots\\
&\,\dots-D_{w_{j_1\dots
  j_{2p-1}}} A^{i_1\dots i_{2p}}(w){\nabla^{2p-1}_{j_1\dots
  j_{2p-1}}}v\nabla^{2p}_{i_1\dots
  i_{2p}}w\nonumber\\
&\,-D_w b(w)v\dots-D_{w_{j_1\dots
  j_{2p-1}}} b(w){\nabla^{2p-1}_{j_1\dots
  j_{2p-1}}}v\Bigr)\,,\nonumber
\end{align}
where $v_0=v(\,\cdot\,,0)$ and we denoted by $D_{w_{j_1\dots
  j_k}} A^{i_1\dots i_{2p}}(w)$, $D_{w_{j_1\dots
  j_k}} b(w)$ the derivatives of the tensor $A$ and of the function $b$
with respect to their variables $\nabla^{k}_{j_1\dots j_k}w$, respectively.\\
Then, we can see that $d\M{F}_w(v)=(z,h)\in W^{p(2m-1),2}(M)\times
P^{m-1}(M,T)$ implies that $v$ is a solution of
the linear problem 
$$
\begin{cases}
\displaystyle{v_t-\widetilde{A}^{i_1\dots i_{2p}}\nabla^{2p}_{i_1\dots
  i_{2p}}v-\sum_{k=0}^{2p-1}\widetilde{R}_k^{j_1\dots
  j_k}\nabla^k_{j_1\dots j_k}v=h}\\
\displaystyle{v(\,\cdot\,,0) = z \,,}
\end{cases}
$$
where 
\begin{align*}
\widetilde{A}^{i_1\dots i_{2p}}=&\,A^{i_1\dots
    i_{2p}}(w)\,,\\
\widetilde{R}_k^{j_1\dots
  j_k}=&\,D_{w_{j_1\dots
  j_k}} A^{i_1\dots i_{2p}}(w)\nabla^{2p}_{i_1\dots
  i_{2p}}w+D_{w_{j_1\dots
  j_k}} b(w)
\end{align*}
are smooth tensors independent of $v$.\\
By Proposition~\ref{teopolden2} for every $(z,h)\in W^{p(2m-1),2}(M)\times
P^{m-1}(M,T)$ there exists a unique solution $v$ of this problem, 
hence $d\M{F}_w$ is a Hilbert space isomorphism and the inverse
function theorem can be applied, as the map $\M{F}$ is $C^1$ by Lemma~\ref{F}.
Hence, the map $\M{F}$ is a diffeomorphism 
of a neighborhood $U\subset P^m(M,T)$ of $w$ onto 
a neighborhood $V\subset W^{p(2m-1),2}(M)\times P^{m-1}(M,T)$ of
$(u_0,f)$.

Getting back to the functions $a_\ell$, we claim that we can choose them such that 
$a_\ell=\partial_t^\ell w\vert_{t=0}\in C^\infty(M)$ for every
$\ell=0,\dots,m-1$.\\ 
We apply the following recurrence procedure. We set $a_0=u_0\in
C^\infty(M)$ and, assuming to have defined $a_0,\dots, a_\ell$, we 
consider the derivative 
$$
\partial_t^{\ell+1}w\vert_{t=0}=\partial_t^\ell[A^{i_1\dots i_{2p}}(x,t,\widetilde{u}_0,\nabla\widetilde{u}_0,\dots
,\nabla^{2p-1}\widetilde{u}_0)\nabla^{2p}_{i_1\dots i_{2p}}w +
b(x,t,\widetilde{u}_0,\nabla\widetilde{u}_0,\dots,\nabla^{2p-1}\widetilde{u}_0)]\Bigr\vert_{t=0}
$$
and we see that the right--hand side contains time-derivatives at time
$t=0$ of $\widetilde{u}_0,\dots,\nabla^{2p-1}\widetilde{u}_0$ and
$\nabla^{2p}_{i_1\dots i_{2p}}w$ only up to the order
$\ell$, hence it only depends on the functions $a_0,\dots,a_\ell$. 
Then, we define $a_{\ell+1}$ to be equal to such
expression. Iterating up to $m-1$, the set of functions $a_0,\dots,a_{m-1}$ satisfies
the claim.

Then, $a_\ell=\partial_t^\ell\widetilde{u}_0\vert_{t=0}=\partial_t^\ell
w\vert_{t=0}$ and it easily follows by the ``structure'' of the
function $f\in C^\infty(M\times[0,T])$, that we 
have $\partial_t^\ell f\vert_{t=0}=0$ and 
$\nabla^j\partial_t^\ell f\vert_{t=0}=0$ for any
$0\leq \ell\leq m-1$ and $j\in\NN$.

We consider now for any $k\in\NN$ 
the ``translated'' functions $f_k:M\times[0,T]\to\RR$ given by 
$$
f_k(x,t)=
\begin{cases}
0 &\text{\, if $t<1/k$}\\
f(x,t-1/k) \,\,&\text{\, if\, $1/k\leq t\leq T$}\,.
\end{cases}
$$
Since $f\in C^\infty(M\times[0,T])$ and
$\nabla^j\partial_t^\ell f\vert_{t=0}=0$ for every $0\leq\ell\leq m-1$ and every
$j \in\NN$, all the functions
$\nabla^j\partial_t^\ell f_k\in C^0(M\times [0,T])$ for every $0\le \ell \le m-1$ and $j\ge 0$, it follows easily that
$$
\nabla^j\partial_t^\ell f_k\to \nabla^j\partial_t^\ell f\quad \text{in
  $L^2(M\times [0,T])$ for $0\le \ell\le m-1$, $j\ge 0$}\,,
$$
hence $f_k\to f$ in $P^m(M,T)$.

Hence, there exists a function 
$\widetilde{f}\in P^{m-1}(M,T)$ such that $(u_0,\widetilde{f})$ belongs
to the neighborhood $V$ of $\M{F}(w)$ and $\widetilde{f}=0$ in
$M\times[0,T^\prime]$ for some $T^\prime\in (0,T]$. 
Since $\M{F}\vert_U$ is a diffeomorphism between $U$ and $V$, 
we can find a function $u\in U$ such that
$\M{F}(u)=(u_0, \widetilde{f})$. Clearly such $u\in P^m(M,T^\prime)$ is a solution
of problem~\eqref{eq0} in $M\times[0,T^\prime]$. 
Since $u\in P^m(M,T^\prime)$ implies that $\nabla^{2p-1}u\in C^0(M\times[0,T^\prime])$, parabolic
regularity implies that actually $u\in C^\infty(M\times [0,T^\prime])$.

\medskip

We now prove uniqueness by a standard energy estimate, which we
include for completeness. In the sequel for simplicity 
we relabel $T$ the time $T^\prime$ found above.

Suppose that we have two smooth solutions
$u,v:M\times[0,T]\to\RR$ of Problem~\eqref{eq0}. 
Setting $w:=u-v$, we compute in an orthonormal frame
\begin{align*}
\frac{d\,}{dt}\int_M\vert \nabla^pw \vert^2\,d\mu=&\,\int_M 2\nabla^p_{i_1\dots i_p}w\nabla^p_{i_1\dots
  i_p}\Bigl(A(u)\cdot \nabla^{2p}u-A(v)\cdot \nabla^{2p}v\Bigr)\,d\mu\\
&\,+\int_M 2\nabla^p_{i_1\dots i_p}w\nabla^p_{i_1\dots i_p}\Bigl(b(u)-b(v)\Bigr)\,d\mu\\
=&\,2\int_M \nabla^p_{i_1\dots i_p}w\nabla^p_{i_1\dots
  i_p}\Bigl(A(u)\cdot \nabla^{2p} w\Bigl)\,d\mu\\
&\,+2\int_M \nabla^p_{i_1\dots i_p}w\nabla^p_{i_1\dots
  i_p}\Bigl((A(u)-A(v))\cdot \nabla^{2p} v\Bigr)\,d\mu\\
&\,+2(-1)^p\int_M\nabla^{p}_{i_1\cdots i_p}\nabla^{p}_{i_1\cdots i_p}w\Bigl(b(u)-b(v)\Bigr)\,d\mu\\
\leq&\,2\int_M \nabla^p_{i_1\dots i_p}w\nabla^p_{i_1\dots
  i_p}\Bigl(A(u)\cdot\nabla^{2p}w\Bigr)\,d\mu\\
&\,+2\int_M\vert\nabla^{2p}w\vert\Bigl
(\vert A(u)-A(v)\vert\,\vert\nabla^{2p}v\vert+\vert b(u)-b(v)\vert\Bigr)\,d\mu\,,
\end{align*}
where the integrals over $M$ are intended at time $t\in [0,T]$.\\
Now we consider the integral $\int_M \nabla^p_{i_1\dots i_p}w
\nabla^p_{i_1\dots
  i_p}(A^{j_1\dots j_{2p}}(u)\nabla^{2p}_{j_1\dots
  j_{2p}}w)\,d\mu$. Expanding the derivative 
$\nabla^p_{i_1\dots
  i_p}(A^{j_1\dots j_{2p}}(u)\nabla^{2p}_{j_1\dots j_{2p}}w)
$ we will get one special term $A^{j_1\dots j_{2p}}(u)\nabla^{3p}_{i_1\dots
  i_pj_1\dots j_{2p}}w$ and several other terms of the form
$B(x,t,u,\dots,\nabla^{3p-1}u)\#\nabla^qw$ with $2p\leq q<3p$, for
some tensor $B$ smoothly depending on its arguments, where the symbol
$\#$ means metric 
contraction on some indices. For each of these terms, integrating
repeatedly by parts, we can write
$$
\int_M \nabla^pw \# B(x,t,u,\dots,\nabla^{3p-1}u)\#\nabla^qw\,d\mu=\sum_{\ell=p}^{2p}\int_M \nabla^{\ell}w\#
D_\ell(x,t,u,\dots,\nabla^{4p-1}u)\#\nabla^{q-p}w\,d\mu
$$
where the tensors $D_\ell$ are smoothly
depending on their arguments.\\
Since $u\in C^\infty(M\times[0,T])$, all the tensors $D_\ell$ are bounded, hence we can estimate
\begin{align*}
\int_M \nabla^p_{i_1\dots i_p}w
\nabla^p_{i_1\dots
  i_p}(A(u)\cdot \nabla^{2p}w)\,d\mu\leq
&\,\int_M \nabla^p_{i_1\dots i_p}w A^{j_1\dots j_{2p}}(u)\nabla^{3p}_{i_1\dots
  i_pj_1\dots j_{2p}}w\,d\mu\\
&\,+C\sum_{r=p}^{2p-1}\sum_{\ell=p}^{2p}
\int_M \vert\nabla^\ell w\vert\,\vert\nabla^rw\vert\,d\mu\,.
\end{align*}
where $C$ is a constant independent of time 
(actually $C$ depends only on the structure of $A$).
Interchanging the covariant derivatives we have
$$
\nabla^{3p}_{i_1\dots
  i_pj_1\dots j_{2p}}w=
\nabla^{3p}_{j_1\dots
  i_{2p}i_1\dots i_p}w+ \sum_{q=0}^{3p-1}R_q\#\nabla^qw
$$
where the tensors $R_q$ are functions of the Riemann tensor and its
covariant derivatives, hence they are smooth and bounded. We can
clearly deal with this sum of terms as above, by means of
integrations by parts, obtaining the same result. 
Then we conclude, also using G\r{a}rding's inequality~\eqref{garding}
\begin{align*}
\int_M \nabla^p_{i_1\dots i_p}w
\nabla^p_{i_1\dots i_p}(A(u)\cdot\nabla^{2p}w)\,d\mu\leq
&\,\int_M \nabla^p_{i_1\dots i_p}w 
A^{j_1\dots j_{2p}}(u)\nabla^{2p}_{j_1\dots j_{2p}}\nabla^p_{i_1\dots i_p}w\,d\mu\\
&\,+C\sum_{r=p}^{2p-1}\sum_{\ell=p}^{2p}
\int_M \vert\nabla^\ell w\vert\,\vert\nabla^rw\vert\,d\mu\\
\le & -\alpha\int_M \vert\nabla^{2p}w\vert^2\,d\mu +C\sum_{r=p}^{2p-1}\sum_{\ell=p}^{2p}
\int_M \vert\nabla^\ell w\vert\,\vert\nabla^rw\vert\,d\mu\,,
\end{align*}
for some positive constant $\alpha$. Getting back to the initial
computation and using Peter--Paul inequality we get
\begin{align*}
\frac{d\,}{dt}\int_M\vert \nabla^pw\vert^2\,d\mu
\leq&\,-2\alpha\int_M \vert\nabla^{2p}w\vert^2\,d\mu 
+C\sum_{r=p}^{2p-1}\sum_{\ell=p}^{2p}\int_M \vert\nabla^{\ell}w\vert\,\vert\nabla^rw\vert\,d\mu\\
&\,+C\int_M\vert\nabla^{2p}w\vert\Bigl
(\vert
A(u)-A(v)\vert\,\vert\nabla^{2p}v\vert+\vert
b(u)-b(v)\vert\Bigr)\,d\mu\\
\leq&\,-2\alpha\int_M
\vert\nabla^{2p}w\vert^2\,d\mu+C\sum_{r=p}^{2p-1}\sum_{\ell=p}^{2p-1}
\int_M |\nabla^\ell w|\vert\nabla^rw\vert\,d\mu\\
&\,+ \sum_{r=0}^{2p-1}\Bigl(
\varepsilon_r \int_M \vert\nabla^{2p}w\vert^2\,d\mu 
+C_{\varepsilon_r}\int_M|\nabla^r w|^2d\mu\Bigr)\\
&\,+\delta \int_M \vert\nabla^{2p}w\vert^2\,d\mu + C_\delta\int_M\Bigl
(\vert A(u)-A(v)\vert^2+\vert
b(u)-b(v)\vert^2\Bigr)\,d\mu \\
\leq&\,-\alpha\int_M \vert\nabla^{2p}w\vert^2\,d\mu
+C \sum_{r=0}^{2p-1}\int_M \vert\nabla^rw\vert^2\,d\mu\\
&\,+C_\delta \int_M\Big(\vert A(u)-A(v)\vert^2+\vert
b(u)-b(v)\vert^2\Big)\,d\mu\,,
\end{align*}
where we chose $\delta+\sum_{r=0}^{2p-1}\varepsilon_r=\alpha$ and we
used the fact that $\vert\nabla^{2p}v\vert$ is bounded.\\
As the tensor $A$ and the function $b$ are smooth, we can easily bound
$$
\vert A(u)-A(v)\vert^2+\vert
b(u)-b(v)^2\vert\leq C\sum_{r=0}^{2p-1}\vert\nabla^r
u-\nabla^rv\vert^2=C\sum_{r=0}^{2p-1}\vert\nabla^r w\vert^2\,,
$$
so finally
$$
\frac{d\,}{dt}\int_M\vert \nabla^pw \vert^2\,d\mu
\leq-\alpha\int_M \vert\nabla^{2p}w\vert^2\,d\mu
+C\sum_{r=0}^{2p-1}\int_M\vert\nabla^rw\vert^2\,d\mu\,.
$$
Now we have, using again G\r{a}rding's and Peter--Paul inequalities,
\begin{align*}
\frac{d\,}{dt}\int_M w^2\,d\mu
=&\,2\int_M w\Bigl(A(u)\cdot\nabla^{2p}u-A(v)\cdot\nabla^{2p}v\Bigr)\,d\mu+2\int_M w\Bigl(b(u)-b(v))\Bigr)\,d\mu\\
=&\,2\int_M w A(u)\cdot \nabla^{2p}w\,d\mu+2\int_M
w\Big((A(u)-A(v))\cdot\nabla^{2p}v+b(u)-b(v)\Big)\,d\mu\\
\leq&\, -\beta\int_M \vert\nabla^pw\vert^2\,d\mu + C\int_M w^2\,d\mu+ C\int_M
w(A(u)-A(v)+b(u)-b(v))\,d\mu\\
\leq&\, -\beta\int_M \vert\nabla^pw\vert^2\,d\mu + C\int_M w^2\,d\mu
+ C\int_M\Bigl(\vert A(u)-A(v)\vert^2+\vert
b(u)-b(v)\vert^2\Bigr)\,d\mu\,.
\end{align*}
Estimating the last integral as before and putting the two computation together we obtain
\begin{align*}
\frac{d\,}{dt}\int_M\Big(\vert \nabla^pw\vert^2+ w^2\Big)\,d\mu \leq
&\,-\frac{\alpha}{2}\int_M \vert\nabla^{2p}w\vert^2\,d\mu
+C\sum_{r=0}^{2p-1}\int_M\vert\nabla^rw\vert^2\,d\mu\,.
\end{align*}
In order to deal with the last term, 
we apply the following Gagliardo--Nirenberg interpolation
inequalities (see~\cite[Proposition~2.11]{aubin0}
and~\cite[Theorem~4.14]{adams}): for every $0\leq r <2p$ and $\varepsilon>0$ there
exists a constant $C_\varepsilon$ such that 
$$
\Vert \nabla^r f\Vert_{L^2(M)}^2\leq \varepsilon\Vert
\nabla^{2p}f\Vert_{L^2(M)}^2
+C_\varepsilon\Vert f\Vert^2_{L^2(M)}
$$
for every function $f\in W^{2p,2}(M)$.\\
Hence, for some $\ve>0$ small enough we get,
\begin{align*}
\frac{d\,}{dt}\int_M\Bigl(\vert \nabla^pw\vert^2+
w^2\Bigr)\,d\mu
\leq&\,-\frac{\alpha}{4}\int_M \vert\nabla^{2p}w\vert^2\,d\mu
+C\sum_{r=0}^{2p-1}\varepsilon\int_M\vert\nabla^{2p}w\vert^2\,d\mu+C\sum_{r=0}^{2p-1}C_{\varepsilon}\int_Mw^2\,d\mu\\
\leq&\,C\int_M w^2\,d\mu\,.
\end{align*}
From this ordinary differential inequality and Gronwall's lemma, it
follows that if the quantity $\int_M(\vert \nabla^pw\vert^2+
w^2)\,d\mu$ is zero at some time $t_0$, then it must be zero
for every time $t\in [t_0,T]$. Since at $t=0$ we have 
$w(\,\cdot\,,0)=u_0-v_0=0$, we are done.

\medskip

We now prove the continuous dependence
of a solution $u\in C^\infty(M\times[0,T])$ on its initial
datum $u_0=u(\,\cdot\,,0)\in C^\infty(M)$. Fix any $m\in\NN$
satisfying condition~\eqref{2pm}, so that by the Sobolev embeddings 
$u\in P^m(M,T)$ implies $\nabla^{2p-1}u\in C^0(M\times[0,T])$. 
By the above argument, $u=(\M{F}|_U)^{-1}(u_0,0)\in P^m(M,T)$ 
where $\M{F}|_U$ is a diffeomorphism of an open set $U\subset
P^{m}(M,T)$ onto $V\subset W^{p(2m-1),2}(M)\times P^{m-1}(M,T)$, with
$(u_0,0)\in V$. Then, assuming that $u_{k,0}\to u_0$ in $C^\infty(M)$
as $k\to\infty$, we also have $u_{k,0}\to u_0$ in $W^{p(2m-1),2}(M)$,
hence for $k$ large enough $(u_{k,0},0)\in V$ and there exists $u_k\in
U$ 
such that $\M{F}(u_k)=(u_{k,0},0)$. This is the unique solution in $P^m(M,T)$ (hence in $C^\infty(M\times [0,T])$ by parabolic
bootstrap) with initial 
datum $u_{k,0}$. Moreover, since $\M{F}|_U$ is a diffeomorphism, we have $u_k\to u$ in $P^m(M,T)$.\\
By uniqueness, we can repeat the same
procedure for any $m\in\mathbb{N}$ satisfying condition~\eqref{2pm} 
concluding that $u_k\to u$ in $P^m(M,T)$ for every such
$m\in\mathbb{N}$, hence in $C^\infty(M\times [0,T])$.

\section{Proof of Lemma~\ref{F}}\label{lemmasec}

We shall write $P^m=P^m(M,T)$, $L^q=L^q(M\times [0,T])$,
$C^0=C^0(M\times [0,T])$ etc..., so that for instance $C^0(P^{m};C^1)$
will denote the space of continuous maps from $P^{m}(M,T)$
to $C^1(M\times [0,T])$. The first component of $\M{F}$, i.e. the map
$u\mapsto u(\,\cdot\,,0)$ is linear and bounded from $P^{m}$ to
$W^{p(2m-1),2}(M)$, by Proposition~\ref{teopolden2}, therefore it is 
$C^1$. Obviously the map $u\mapsto \de_t u$ is linear and
bounded from $P^{m}$ to $P^{m-1}$, hence also $C^1$. Thus, it
remains to show that the two maps
$$
\M{F}_A (u):= A(u)\cdot \nabla^{2p}u\,,\quad \M{F}_b(u):=b(u)
$$
belong to $C^1(P^{m};P^{m-1})$.

\medskip

We first prove that $\M{F}_A ,\M{F}_b\in C^0(P^{m};P^{m-1})$.
By an induction argument, it is easy to see that for every $k\in\NN$
\begin{equation}\label{dA2}
\nabla^k\big(A(u)\cdot\nabla^{2p} u\big)
=\sum_{j=0}^k\sum_{\substack {i_1,i_2,\dots,i_{j+1}\ge 1 \\i_1+\dots
    +i_{j+1}\le k+2p+(2p-1)j}}\de^j A(u) \# \nabla^{i_1}u \#\dots
\#\nabla^{i_{j+1}}u\,,
\end{equation}
where $\de^j
A(u)$ denotes the $j$--th derivative of $A$ with respect to any of
its arguments and $D\#E$ denotes an arbitrary contraction with
the metric of two tensors $D$ and $E$.

Taking into account formula~\eqref{dA2} with $k\leq 2p(m-1)$, in order to prove
that the map $u\mapsto \nabla^{2p(m-1)}(A(u)\cdot \nabla^{2p}u)$ 
belongs to $C^0(P^{m};L^2)$ we have to show that any map of the form
\begin{equation}\label{inL2}
u\mapsto \de^j A(u)\#\nabla^{i_1}u\#\cdots\#\nabla^{i_{j+1}}u
\end{equation}
belongs to $C^0(P^{m};L^2)$ whenever
\begin{equation}\label{condi}
i_1+\dots +i_{j+1}\le 2pm+(2p-1)j\,\quad\text{ and }\qquad i_1,\dots,i_{j+1}\ge 1\,.
\end{equation}
The case $r=0$ and $\ell ={2p-1}$ of the Sobolev
embeddings~\eqref{sobo4} below and condition~\eqref{2pm} imply that
if $u\in P^m$ then $\nabla^{2p-1}u\in C^0$ (and the immersion is bounded), hence all the maps
$u\mapsto \de^j A(u)$ belong to $C^0(P^{m};C^0)$.\\
We can assume from now on that $j\geq 1$, since in the case $j=0$, 
we get the term $A(u)\#\nabla^{2p+k} u$ which 
is continuous from $P^m$ to $L^2$ as a function of $u$ for
$k\leq 2p(m-1)$.\\
As for the factors $\nabla^{i_\ell}u$ appearing in formula~\eqref{inL2}, first we assume that each
$i_\ell$ is such that we are in case~\eqref{sobo3} of Sobolev embeddings, i.e.
\begin{equation}\label{condsobo}
\frac{1}{q_\ell}:=\frac{1}{2}-\frac{2pm-i_\ell}{n+2p}>0,
\end{equation}
so that the map $u\mapsto \nabla^{i_\ell} u$ lies in $C^0(P^{m};L^{q_{\ell}})$.
By H\"older's inequality, the condition
\begin{equation}\label{contoalg0}
\frac{1}{q}:=\sum_{\ell=1}^{j+1}\frac{1}{q_{\ell}}=\sum_{\ell=1}^{j+1}\bigg(\frac{1}{2}-\frac{2pm-i_\ell}{n+2p}\bigg)\le\frac{1}{2}\,,
\end{equation}
implies that the map $u\mapsto\nabla^{i_1}u\#\cdots\#\nabla^{i_{j+1}}u$ belongs to $C^0(P^m;L^{q})$, hence also to $C^0(P^m;L^2)$, as $L^q$ embeds 
continuously into $L^2$ for $q\ge 2$. Then, if we show 
inequality~\eqref{contoalg0}, the map defined by formula~\eqref{inL2}
belongs to $C^0(P^{m};L^2)$.
From inequalities~\eqref{2pm}, \eqref{condi} and $j\ge 1$ it follows,
\begin{equation}\label{contoalg}
\sum_{\ell=1}^{j+1}\frac{1}{q_{\ell}}\le \frac{j+1}{2}-\frac{2pm(j+1)-2pm-(2p-1)j}{n+2p}
=\frac{1}{2}+\frac{j}{2}-\frac{(2pm-2p+1)j}{n+2p}
<\frac{1}{2}\,.
\end{equation}

Now, if for some $i_\ell$, say $i_1,\ldots,i_s$, 
we have $\frac{2pm-i_\ell}{n+2p}>\frac{1}{2}$, then we are in
case~\eqref{sobo4} of Sobolev embeddings and the corresponding maps $u\mapsto \nabla^{i_\ell}u$ belong to
$C^0(P^{m};C^0)$, hence we can avoid to estimate such factors, as
for $A(u)$. Then, since~\eqref{condsobo} holds for
$\ell\in\{s+1,\dots,j+1\}$, arguing again by
induction, in this case 
we have to deal with functions $u\mapsto \nabla^{i_{s+1}}u\#\cdots
\#\nabla^{i_{j+1}}u$ under the conditions
$$
i_{s+1}+\dots +i_{j+1}\le 2pm+(2p-1)(j-s)\,\quad\text{ and
}\qquad i_{s+1},\dots i_{j+1}\ge 1\,.
$$
Then, computing as in inequality~\eqref{contoalg} one shows
\begin{align}\label{contoalg2}
\sum_{\ell=s+1}^{j+1}\frac{1}{q_{\ell}}
&\,\le \frac{j+1-s}{2}-\frac{2pm(j+1-s)-2pm-(2p-1)(j-s)}{n+2p}\\
&\,=\frac{1}{2}+\frac{j-s}{2}-\frac{(2pm-2p+1)(j-s)}{n+2p}\nonumber\\
&\,\leq\frac{1}{2}\nonumber\,,
\end{align}
where we intend that if $s=j+1$ there is nothing to sum. Notice
that the last inequality is strict if $s\not=j$, and in the case 
$s=j$ the map $u\mapsto\nabla^{i_{j+1}}u$ is continuous
from $P^m$ to $L^2$ as $i_{j+1}\leq 2pm$.

If in addition for some $i_\ell$, say $i_{s+1},\ldots, i_r$, 
we have $\frac{2pm-i_\ell}{n+2p}=\frac{1}{2}$ (i.e. 
we are in the critical case~\eqref{sobo3b} of the Sobolev
embeddings), we know that for such indices the maps 
$u \mapsto \nabla^{i_\ell}u$ belong to $C^0(P^{m};L^q)$ for every
$1\le q<\infty$. Then inequality~\eqref{contoalg2} 
still holds true if we choose $q_{s+1},\ldots, q_r$ large enough,
since, unless $s=r=j$, the last inequality in \eqref{contoalg2} is strict.\\
Hence, we conclude as before that the map $u\mapsto \nabla^{2p(m-1)}(A(u)\cdot
\nabla^{2p}u)$ lies in $C^0(P^{m};L^2)$. 

The time or mixed space-time
derivatives $\partial_t^r\nabla^k(A(u)\cdot
\nabla^{2p}u)$ with $2pr+k\leq 2p(m-1)$ can be treated in a similar
way, observing that the functions 
$\de_t^r \nabla^\ell u$ have the same integrability of
$\nabla^{2pr+\ell}u$ from the point of view of the
embeddings~\eqref{sobo3}--\eqref{sobo4}.\\
Starting from formula~\eqref{dA2} and differentiating in time, again
by an induction argument, one gets
\begin{equation}\label{gener}
\,\de_t^r \nabla^k\big(A(u)\cdot\nabla^{2p} u\big) 
\,=\sum_{j=0}^{r+k}\sum_{\substack {i_1,\dots,i_{j+1},
    \iota_1,\dots,\iota_{j+1}\ge 0\\ i_1+\dots +i_{j+1}\le
    k+2p+(2p-1)j\\ \iota_1+\dots+\iota_{j+1}\le r }}\de^j
A(u)\#\de_t^{\iota_1}\nabla^{i_1}u\#\cdots
\#\de_t^{\iota_{j+1}}\nabla^{i_{j+1}}u\,.
\end{equation}
Then, with the same proof as before one shows that a map of the form
$$
u\mapsto \de^j A(u)\#\de_t^{\iota_1}\nabla^{i_1}u\#\cdots
\#\de_t^{\iota_{j+1}}\nabla^{i_{j+1}}u
$$
belongs to $C^0(P^{m+1};L^2)$ whenever $i_1,\dots, i_{j+1},\iota_1,\dots,\iota_{j+1}\ge 0$ and
\begin{equation}\label{condib}
i_1+\dots +i_{j+1}+2p(\iota_1+\dots+\iota_{j+1})\le
2pm+(2p-1)j\,.
\end{equation}
Hence the map $u\mapsto \de_t^r\nabla^k\big(A(u)\cdot \nabla^{2p}u\big)$
belongs to $C^0(P^{m};L^2)$ for $2pr+k \le 2p(m-1)$, 
which means that $\M{F}_A\in C^0(P^{m};P^{m-1})$ as wished.

The map $\M{F}_b$ can be treated in a similar way, so also $\M{F}_b \in C^0(P^{m};P^{m-1})$.

\medskip

It remains to prove that $d\M{F}_A, d\M{F}_b\in C^0(P^m;
L(P^m;P^{m-1}))$, where $L(P^m;P^{m-1})$ denotes the Banach space of
bounded linear maps from $P^m$ into $P^{m-1}$. We first claim that the
Gateaux derivative
\begin{equation}\label{diffk}
(u,v )\mapsto d\M{F}_A(u)(v):= \frac{d}{dt}\M{F}_A(u+tv)\Big|_{t=0}
\end{equation}
belongs to $C^0(P^{m}\times P^m;P^{m-1})$. Indeed, $d\M{F}_A(u)(v)$ can be written as
$$
B(u,v)\#\nabla^{2p}u + A(u)\cdot \nabla^{2p}v\,,
$$
where $B$ is a tensor depending smoothly on
$x,t,u,\ldots,\nabla^{2p-1}u$ and linearly on some derivative of $v$
up to the order $2p-1$, that is,
$B(u,v)=\sum_{\ell=0}^{2p-1}B_\ell(u)\cdot \nabla^\ell v$, 
compare with formula~\eqref{dfeq}.
The estimates proven for $\M{F}_A$ can be applied to any term of the
form $\de_t^r\nabla^k(B(u,v)\#\nabla^{2p}u)$, since they can be expressed as a
sum similar to the right--hand side of identity~\eqref{gener}. The
only difference is that now in every term of such sum one linear 
occurrence of $u$ is replaced by $v$. Precisely, writing $u_1:= u$, $u_2:=v$ every term 
$\partial^j A(u)\#\de_t^{\iota_1}\nabla^{i_1}u\#\cdots
\#\de_t^{\iota_{j+1}}\nabla^{i_{j+1}}u$ has to be replaced by some
\begin{equation}\label{tau}
D(u)\#\de_t^{\iota_1}\nabla^{i_1}u_{\tau_1}\#
\cdots\#\de_t^{\iota_{j+1}}\nabla^{i_{j+1}}u_{\tau_{j+1}}
\end{equation}
where exactly one of the indices $\tau_1,\dots,\tau_{j+1}$ is equal to $2$, and the others are equal to $1$.\\
An analogous reasoning applies to the term $A(u)\cdot \nabla^{2p}v$. 
It is then easy to see, since $v\in P^m$ like $u$, that we can repeat the
same estimates used to show the continuity of
$u\mapsto\M{F}_A(u)$. This proves in particular that 
$d\M{F}_A(u)\in L (P^m;P^{m-1})$.\\
In order now to prove that $d\M{F}_A\in C^0(P^m;L(P^m;P^{m-1}))$ we need
to show that
$$
\sup_{\|v\|_{P^m}\le 1}\|d\M{F}_A({\widetilde
  u})(v)-d\M{F}_A(u)(v)\|_{P^{m-1}}\to 0\quad \text{as }{\widetilde
  u}\to u\text{ in }P^m\,.
$$
Again, this estimate is similar to what we have already done. Indeed, supposing that $\tau_{j+1}$ is the only index equal to
2 in~\eqref{tau} and assuming that there are no time derivatives
for the sake of simplicity, we want to see that, 
as ${\widetilde  u}\to u$ in $P^m$,
\begin{equation}\label{Btilde}
\sup_{\|v\|_{P^m}\le 1}   \|D({\widetilde u})\#\nabla^{i_1}{\widetilde u}\#
\cdots\#\nabla^{i_{j}}{\widetilde u}\,\nabla^{i_{j+1}}v
-D(u)\#\nabla^{i_1}u\#
\cdots\#\nabla^{i_{j}}u\,\nabla^{i_{j+1}}v\|_{L^2}\to 0\,,
\end{equation}
where $i_1+\dots+i_{j+1}\leq 2pm+(2p-1)j$ (see formula~\eqref{dA2} and 
condition~\eqref{condi}).\\
Adding and subtracting terms, one gets
\begin{align*}
\Bigl\vert\,D({\widetilde u})\#\nabla^{i_1}{\widetilde u}\#
\cdots\#\nabla^{i_{j}}{\widetilde u}\,\nabla^{i_{j+1}}v
-&\,D(u)\#\nabla^{i_1}u\#
\cdots\#\nabla^{i_{j}}u\,\nabla^{i_{j+1}}v\,\Bigr\vert\\
\le\Bigl\{&\,\vert D({\widetilde u})-D(u)\vert\,\vert\nabla^{i_1}\widetilde{u}\vert\,\cdots\,\vert\nabla^{i_{j}}\widetilde{u}\vert\\
&\,+\vert D(u)\vert\,\vert\nabla^{i_1}({\widetilde
  u}-u)\vert\,\vert\nabla^{i_2}\widetilde{u}\vert\,\cdots\,\vert\nabla^{i_{j}}\widetilde{u}\vert\\
&\,+\,\cdots\,+\vert D(u)\vert\,\vert\nabla^{i_1}u\vert\,\cdots\,\vert\nabla^{i_{j}}({\widetilde
  u}-u)\vert\Bigr\}\,\vert\nabla^{i_{j+1}}v\vert\,.
\end{align*}
Studying now the $L^2$ norm of this sum, the first term can be bounded
as before and it goes to zero as $D(u)$ is continuous from $P^m$ to
$L^\infty$. The $L^2$ norm of all the other terms, repeating step by step the previous
estimates, using H\"older's inequality and
embeddings~\eqref{sobo3}--\eqref{sobo4}, 
will be estimated by some product
$$
C\Vert u\Vert^\alpha_{P^m}\Vert \widetilde{u}\Vert^\beta_{P^m}\Vert
v\Vert^\gamma_{P^m}\Vert\widetilde{u}-u\Vert^\sigma_{P^m}\leq
C\Vert u\Vert^\alpha_{P^m}\Vert
\widetilde{u}\Vert^\beta_{P^m}\Vert\widetilde{u}-u\Vert^\sigma_{P^m}
$$
for a constant $C$ and some nonnegative exponents $\alpha,\beta,\gamma,\sigma$
satisfying $\alpha+\beta+\gamma+\sigma\leq 1$ and $\sigma>0$. Here we we used
the fact that $\Vert v\Vert_{P^m}\leq 1$.\\
As ${\widetilde u}-u\to 0$ in $P^m$, this last product goes to zero in
$L^2$, hence uniformly for $\Vert v\Vert_{P^m}\le 1$ and
inequality~\eqref{Btilde} follows, as claimed.
The analysis of the estimates with mixed time/space derivatives is
similar and all this argument works analogously for the term $A(u)\cdot \nabla^{2p}v$.\\
Then, the Gateaux derivative $d\M{F}_A$ is continuous which implies that it coincides with the Frech\'et derivative, hence $\M{F}_A\in C^1(P^m;P^{m-1})$.

The map $\M{F}_b$ can be dealt with in the same way and we are done.

\section{Parabolic Sobolev Embeddings}

\begin{prop}\label{propsobo} Let $u\in P^m(M,T)$. Then for
  $r,\ell\in\mathbb{N}$ with $2pr+\ell\le 2mp$, we have
\begin{align}\label{sobo3}
\|\de_t^r\nabla^\ell u\|_{L^q(M\times[0,T])}\le C
\|u\|_{P^m(M,T)}\quad
&\text{if}\quad\frac{1}{q}=\frac{1}{2}-\frac{2pm-\ell-2pr}{n+2p}>0\,;\\
\label{sobo3b}
\|\de_t^r\nabla^\ell u\|_{L^q(M\times[0,T])}\le C
\|u\|_{P^m(M,T)}\quad
&\text{if}\quad\frac{1}{2}-\frac{2pm-\ell-2pr}{n+2p}=0\,\text{ and }\,
1\le q<\infty\,;
\end{align}
the function $\de_t^r\nabla^\ell u$ is continuous and 
\begin{align}\label{sobo4}
\|\de_t^r\nabla^\ell u\|_{C^0(M\times[0,T])}\le C
\|u\|_{P^m(M,T)} \quad &\text{if} \quad
\frac{1}{2}-\frac{2pm-\ell-2pr}{n+2p}<0\,,\phantom{\,\text{ and }\,
1\le q<\infty}
\end{align}
where the constant $C$ does not depend on $u$.
\end{prop}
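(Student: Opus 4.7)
The plan is to recognize $P^m(M,T)$ as a Sobolev space of order $2pm$ in an anisotropic scale on $\R{n+1}$ in which time derivatives are weighted $2p$ times as strongly as spatial derivatives, so that the ``effective dimension'' of the underlying space becomes $n+2p$. Indeed, under the parabolic scaling $(x,t)\mapsto(\lambda x,\lambda^{2p}t)$, the frequency ball $\{|\xi|\le R,\,|\tau|\le R^{2p}\}$ has Lebesgue measure of order $R^{n+2p}$, and the exponent $1/q=1/2-(2pm-\ell-2pr)/(n+2p)$ in \eqref{sobo3}--\eqref{sobo4} is precisely the classical Sobolev exponent for a space of order $2pm-\ell-2pr$ in dimension $n+2p$.

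Concretely, I would first use a finite atlas of $M$ with a subordinate smooth partition of unity, together with the equivalence between the Riemannian covariant derivatives $\nabla^k$ and the Euclidean partial derivatives of order $k$ (up to bounded lower-order terms involving the Christoffel symbols), to reduce the problem to compactly supported smooth functions in a Euclidean chart $V\subset\R{n}$. Next, I would extend $u$ in time from $[0,T]$ to all of $\RR$ via an iterated Stein-type reflection followed by a smooth cutoff, obtaining $\widetilde u\in C_c^\infty(\R{n+1})$ whose corresponding parabolic norm of order $m$ is bounded by a constant times $\|u\|_{P^m(M,T)}$. By Plancherel's theorem, this norm squared is comparable to $\int_{\R{n+1}}(1+|\xi|^{2p}+|\tau|)^{2m}|\widehat{\widetilde u}|^2\,d\xi\,d\tau$, the comparison being elementary by splitting $\R{n+1}$ into the regions where $|\xi|^{2p}$ or $|\tau|$ dominates. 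Since the Fourier symbol of $\de_t^r\nabla^\ell$ is pointwise controlled by $(1+|\xi|^{2p}+|\tau|)^{(2pr+\ell)/(2p)}$, the function $\de_t^r\nabla^\ell\widetilde u$ lies in the anisotropic Bessel potential space of order $s:=2pm-\ell-2pr\ge 0$.

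The final analytic step is an anisotropic Hardy-Littlewood-Sobolev argument: replacing the Euclidean distance on $\R{n+1}$ with the parabolic quasi-distance $d((x,t),(x',t'))=|x-x'|+|t-t'|^{1/(2p)}$, the Bessel kernel associated with the Fourier multiplier $(1+|\xi|^{2p}+|\tau|)^{-s/(2p)}$ enjoys the same decay and integrability behaviour as its classical counterpart, but with the effective dimension $n+2p$ in place of the Euclidean dimension $n+1$. The usual Sobolev inequality argument then yields \eqref{sobo3} in the subcritical range $s<(n+2p)/2$; an $L^q$-estimate for every finite $q$ in the critical case $s=(n+2p)/2$ via a Trudinger-type calculation on the borderline kernel; and continuity together with a sup-norm bound in the supercritical range $s>(n+2p)/2$ via a Morrey-type oscillation estimate over parabolic balls.

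The main obstacle is the time extension in the first step. A single reflection across $t=0$ preserves $L^2$ but destroys the continuity of $\de_t u$, and higher-order reflections require coefficients chosen carefully so that all mixed space-time derivatives of parabolic order up to $2pm$ remain of class $L^2$ across $t=0$ (and $t=T$). This is the parabolic analogue of the Hestenes-Seeley-Stein extension, and the coefficients depend on $m$, which is why one fixes $m\in\NN$ at the outset. An alternative is to avoid extension and prove the embeddings directly on $M\times[0,T]$ using a Whitney-type decomposition in the parabolic quasi-metric, at the cost of a longer and more technical localization argument.
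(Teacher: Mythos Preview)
Your approach is correct in outline but takes a genuinely different route from the paper. You work on $\R{n+1}$ after localization and a Seeley-type time extension, characterize $P^m$ via the anisotropic Fourier weight $(1+|\xi|^{2p}+|\tau|)^{m}$, and then invoke anisotropic Hardy--Littlewood--Sobolev with homogeneous dimension $n+2p$. The paper instead stays on $M\times[0,T]$ throughout and uses vector-valued interpolation: from the identification
\[
P^m(M,T)=\bigcap_{j=0}^{m} H^j\big([0,T];H^{2p(m-j)}(M)\big)
\]
and the Lions--Magenes interpolation $P^m(M,T)\hookrightarrow H^s([0,T];H^{2p(m-s)}(M))$ for every real $s\in[0,m]$, it applies the \emph{one-dimensional} Sobolev embedding in time and the \emph{$n$-dimensional} Sobolev embedding in space separately, and then chooses $s=\frac{rn+2pm-\ell}{n+2p}$ so that the resulting time exponent and space exponent coincide, landing directly in $L^q(M\times[0,T])$ with the stated $q$. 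No extension, no Fourier analysis, no anisotropic kernel estimates are needed.

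What each buys: your Fourier route makes the ``effective dimension $n+2p$'' heuristic rigorous and transparent, and generalizes immediately to fractional orders and to the full scale of anisotropic Besov--Triebel--Lizorkin spaces; the price is the extension machinery (which you rightly flag as the delicate step) and the need to develop or cite anisotropic Bessel-kernel bounds. The paper's interpolation argument is shorter, uses only black-box results from Lions--Magenes and the scalar Sobolev embeddings on $[0,T]$ and on $M$, and sidesteps the extension issue entirely by never leaving the product domain; its only nontrivial step is the algebraic verification that the chosen $s$ lies in the admissible interval, which is a two-line computation.
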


\begin{proof} Of course we can write
$$
P^m(M,T)= L^2([0,T]; H^{2mp}(M))\cap H^1([0,T];
H^{2p(m-1)}(M))\cap \dots\cap H^m([0,T]; L^2(M))\,.
$$
By standard interpolation theory, see e.g.~\cite[Theorem~2.3]{lm},
we have the continuous immersion
$$
P^m(M,T) \hookrightarrow H^s([0,T];H^{2p(m-s)}(M)),\quad
\text{for all }s\in [0,m]\,.
$$
We shall now assume that $\frac{1}{2}-\frac{2pm-\ell-2pr}{n+2p}>0$ and
prove inequality~\eqref{sobo3}. For $0\le \sigma<\frac{1}{2}$ and for any Hilbert
space $X$ we have the Sobolev embedding
$$
H^\sigma([0,T];X)\hookrightarrow L^q([0,T];X)\quad \text{for}\quad
\frac{1}{q}=\frac{1}{2}-\sigma\,.
$$
Then, for $\ell,r\in\mathbb{N}$ with $2pr+\ell\le 2pm$ and for any
$s\in
\big(m-\frac{\ell}{2p}-\frac{n}{4p},m-\frac{\ell}{2p}\big]\cap\big[r,r+\frac{1}{2}\big)$,
also using the standard Sobolev embeddings on $M$, for every $u\in P^m(M,T)$ one gets
\begin{equation*}
\begin{split}
\de_t^r \nabla^\ell u\in
H^{s-r}([0,T];H^{2p(m-s)-\ell}(M))&\hookrightarrow
L^q([0,T];H^{2p(m-s)-\ell}(M)) \\
&\hookrightarrow L^q([0,T];L^{\widetilde{q}}(M))\,,
\end{split}
\end{equation*}
with
$$
\frac{1}{q}=\frac{1}{2}-s+r\quad\text{ and }\quad\frac{1}{\widetilde{q}}=\frac{1}{2}-\frac{2p(m-s)-\ell}{n}\,.
$$
We now choose $s=\frac{rn+2pm-\ell}{n+2p}$ and claim that $s\in
\big(m-\frac{\ell}{2p}-\frac{n}{4p},m-\frac{\ell}{2p}\big]\cap\big[r,r+\frac{1}{2}\big)$.
Then
$$
\frac{1}{q}=\frac{1}{\widetilde{q}}=\frac{1}{2}-\frac{2pm-\ell-2pr}{n+2p}\,,
$$
hence for such $q\in\RR$ we have
$$
u\in L^q([0,T];L^q(M))\simeq L^q(M\times[0,T])\,,
$$
and embedding~\eqref{sobo3} is proven. As for the claim, the
inequalities $s\ge r$ and $s\le m-\frac{\ell}{2p}$ easily follow from
the inequality $2pr+\ell\le 2pm$, while inequality $s<r+\frac{1}{2}$ is
equivalent to $\frac{1}{2}-\frac{2pm-\ell-2pr}{n+2p}>0$. This means 
$\frac{1}{q}>0$ which implies $s> m-\frac{\ell}{2p}-\frac{n}{4p}$.

The proof of inequality~\eqref{sobo3b} is analogous.

Finally, if $\frac{1}{2}-\frac{2pm-\ell-2pr}{n+2p}<0$, using that
for $\sigma>\frac{1}{2}$ one has $H^\sigma([0,T];X)\hookrightarrow
C^0([0,T];X)$ and that for $\sigma>\frac{n}{2}$ one has
$H^\sigma(M)\hookrightarrow C^0(M)$, for every $u\in P^m(M,T)$ we infer
$$
\de_t^r\nabla^\ell u\in H^{s-r}([0,T];H^{2p(m-s)-\ell}(M))\hookrightarrow
C^0([0,T];C^0(M))\simeq C^0(M\times[0,T])\,,
$$
for $s=\frac{rn+2pm-\ell}{n+2p}\in \big(r+\frac{1}{2},
m-\frac{\ell}{2p}-\frac{n}{4p}\big)$. This proves embedding~\eqref{sobo4}.
\end{proof}

\bibliographystyle{amsplain}
\bibliography{biblio}

\end{document}